\newtheorem{lemma}{Lemma}[section]
\newtheorem{thm}{Theorem}[section]
\newtheorem{prop}{Proposition}[section]
\title{Doob's estimate for coherent random variables and maximal operators on trees}
\author{Stanis\l{}aw Cichomski}
\address{Department of Mathematics, Informatics and Mechanics\\
 University of Warsaw\\
Banacha 2, 02-097 Warsaw\\
Poland}
\author{Adam Os\k{e}kowski}
\address{Department of Mathematics, Informatics and Mechanics\\
 University of Warsaw\\
Banacha 2, 02-097 Warsaw\\
Poland}
\numberwithin{equation}{section}
\begin{document}

\begin{abstract} Let $\xi$ be an integrable random variable defined on  $(\Omega, \mathcal{F}, \mathbb{P})$. Fix $k\in \mathbb{Z}_+$ and let $\{\mathcal{G}_{i}^{j}\}_{1\le i \le n,  1\le j \le k}$ be a reference family of  sub-$\sigma$-fields of $\mathcal{F}$, such that  $\{\mathcal{G}_{i}^{j}\}_{1\le i \le n}$ is a filtration for each $j\in \{1,2,\dots,k\}$.
  In this article we explain the underlying connection between the 
analysis of  the maximal functions of the corresponding coherent vector and  basic combinatorial properties of the uncentered Hardy--Littlewood maximal operator. Following a classical approach of Grafakos, Kinnunen and Montgomery-Smith, we establish an appropriate  version of the celebrated Doob's  maximal estimate. \end{abstract}

\maketitle

\section{Introduction}
The inspiration for the results obtained in this paper comes from the recent developments in the theory of coherent distributions. To introduce the necessary notions, suppose that $(\Omega, \mathcal{F}, \mathbb{P})$ is an arbitrary nonatomic probability space. Following \cite{pitman}, we say that  a random vector $X=(X_1,X_2,\ldots,X_n)$ is coherent, if there exist a 
random variable $\xi$ taking values in $[0,1]$ and a sequence $\mathcal{G}=(\mathcal{G}_1$, $\mathcal{G}_2$, $\ldots$, $\mathcal{G}_n)$ of sub-$\sigma$-algebras of $\mathcal F$ such that $X_k=\mathbb E(\xi|\mathcal{G}_k)$ for all $k=1,\,2,\,\ldots,\,n$. The motivation for this definition lies from economics, where coherent distributions are used to model the behavior of agents with partially overlapping information sources \cite{post,PPI}. From the mathematical point of view, such random vectors enjoy many interesting structural properties; for some latest theoretical advances on this subject, see e.g. \cite{contra, EJP675, BPC}. In this article, we will be interested in the universal sharp norm comparison of $\xi$ and the maximal function of $X$. We will drop the assumption $\mathbb{P}(\xi\in [0,1])=1$ and work with arbitrary integrable random variables. For such a $\xi$ and a sequence $\mathcal{G}$, the associated maximal function is given by $M_\mathcal{G}\xi=\sup_j |\mathbb{E}(\xi|\mathcal{G}_j)|.$ 
The starting point is the classical result of Doob, which asserts that
\begin{equation}\label{Doob}
 \Big\|M_\mathcal{G}\xi\Big\|_p\leq \frac{p}{p-1}\|\xi\|_p,\qquad 1<p\leq \infty,
\end{equation}
in the case when $\mathcal{G}$ is a filtration, i.e., we have the nesting condition $\mathcal{G}_1\subseteq \mathcal{G}_2\subseteq \ldots \subseteq \mathcal{G}_n$. Furthermore, for each $p$ the number $p/(p-1)$ is the best universal constant (i.e., not depending on the length of $\mathcal{G}$) allowed in the estimate. The main goal of this paper is to consider \eqref{Doob} for more general families of $\sigma$-algebras: we will assume that $\mathcal{G}$ can be decomposed into the union of filtrations. Specifically, we let $\mathcal{G}$ be of the form
$$ \mathcal{G}:=\Big\{\mathcal{G}_{i}^{j}\Big\}_{\substack{1\le i \le n, \\ 1\le j \le k}},$$
and require  the inclusions $\mathcal{G}_1^j\subseteq \mathcal{G}_2^j\subseteq \ldots \subseteq \mathcal{G}_n^j$ for each $j$. No relation between $\sigma$-algebras $\mathcal{G}^j_i$ with different $j$ is imposed.  Thus, our investigation can be seen as a natural halfway state between the study of general coherent distributions and classical martingales.
 Furthermore, this subject enters into the still vague framework of martingales indexed by partially ordered sets. For a general introduction to this theory see \cite{SIM}, for related Doob's type inequalities see \cite{plane2, array, tree, plane1}. Our reasoning will reveal an unexpected connection between the  analysis of $\max_{i,j}|\mathbb{E}(\xi|\mathcal{G}_{i}^j)|$ and basic combinatorial properties of the uncentered Hardy--Littlewood maximal operator on tree-shaped domains. Due to this interdependence, we will be able to extend the  classical approach introduced in \cite{grafakos2, grafakos1}  and derive an appropriate sharp version of \eqref{Doob}.

\begin{thm}\label{Db}
Let $1<p<\infty$ be a given parameter and let $\mathcal{G}=\big\{\mathcal{G}_{i}^{j}\big\}_{\substack{1\le i \le n, 1\le j \le k}}$ be the union of filtrations as above. Then for any random variable $\xi\in L^p$ we have the estimate
\begin{equation}\label{Doobm}
 \|M_\mathcal{G}\xi\|_p\leq C_{p,k}\|\xi\|_p,
\end{equation}
where $C_{p,k}$ is the unique root of the equation
\begin{equation}\label{defC}
 (p-1)C_{p,k}^p-pC_{p,k}^{p-1}-(k-1)=0.
\end{equation}
For fixed $1<p<\infty$ and $k\geq 1$, the constant $C_{p,k}$ is the best possible: given $\varepsilon>0$, there is an integer $n$, a family $\mathcal{G}$ as above and a random variable $\xi\in L^p$ for which
 $$  \|M_\mathcal{G}\xi\|_p>( C_{p,k}-\varepsilon)\|\xi\|_p.$$
\end{thm}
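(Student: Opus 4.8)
The plan is to treat the estimate \eqref{Doobm} and its sharpness separately, with the combinatorics of the tree model as the central object in both. For the upper bound I would first carry out the standard reductions: splitting $\xi$ into positive and negative parts and using $|\mathbb{E}(\xi|\mathcal{G})|\le \mathbb{E}(|\xi|\,|\,\mathcal{G})$ reduces to the case $\xi\ge 0$, and a routine truncation/approximation argument lets me assume that $\mathcal{F}$ is generated by finitely many atoms, so that every $\mathcal{G}_i^j$ is atomic and $M_\mathcal{G}\xi$ is a genuine maximum. The decisive structural observation is that, since each $\{\mathcal{G}_i^j\}_{1\le i\le n}$ is a filtration, its atoms form a nested family, i.e. a rooted tree; the averaging sets entering $M_\mathcal{G}\xi=\max_{i,j}|\mathbb{E}(\xi|\mathcal{G}_i^j)|$ are exactly the atoms lying on the $k$ ancestor-chains through a given point. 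This is precisely the combinatorial skeleton of the uncentered Hardy--Littlewood maximal operator on a tree-shaped domain, and it is what allows the classical scheme of \cite{grafakos2, grafakos1} to be transplanted into the present probabilistic setting.

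The heart of the argument is a covering/selection lemma on this tree. For a fixed level $\lambda>0$, I would run a stopping-time construction in each of the $k$ filtrations to extract, from the super-level set $\{M_\mathcal{G}\xi>\lambda\}$, a family of averaging atoms on which the average of $\xi$ exceeds $\lambda$, and then control the overlap of these atoms. The key point is that the $k$ ancestor-chains through a vertex meet in a controlled pattern, and a careful accounting should show that the surplus multiplicity contributes the factor $k-1$ rather than the crude bound $k$; this refined overlap estimate is exactly the combinatorial content that distinguishes \eqref{defC} from a naive union of $k$ copies of Doob's inequality. I expect this to be the main obstacle: isolating the precise constant $k-1$ requires tracking how the chains share vertices and verifying that a single selection is simultaneously efficient for all $k$ filtrations.

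With the covering lemma in hand, I would convert it into a distributional inequality relating $\mathbb{P}(M_\mathcal{G}\xi>\lambda)$ to $\int_{\{M_\mathcal{G}\xi>\lambda\}}\xi\,d\mathbb{P}$, and then integrate against $p\lambda^{p-1}\,d\lambda$ in the layer-cake manner of Doob's proof. After an application of H\"older's inequality this yields a self-referential bound for $\|M_\mathcal{G}\xi\|_p$ in terms of $\|\xi\|_p$; optimizing the resulting one-variable functional---equivalently, solving the extremal problem for the decreasing rearrangements of $\xi$ and $M_\mathcal{G}\xi$---produces exactly the polynomial \eqref{defC}. A short monotonicity check shows that $(p-1)C^p-pC^{p-1}-(k-1)$ is strictly negative at $C=1$ and strictly increasing for $C>1$, confirming the existence and uniqueness of the root $C_{p,k}>1$; note also that when $k=1$ the polynomial factors as $C^{p-1}\big((p-1)C-p\big)$, recovering the classical constant $p/(p-1)$ in \eqref{Doob}.

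Finally, for sharpness I would reverse the extremal analysis to build, for each $\varepsilon>0$, a near-optimal example. Working on a discretized $[0,1]$ (or directly on a finite tree), I would take $\xi$ to be a truncated power-type profile and arrange the $k$ filtrations so that the $k$ ancestor-chains through a typical point overlap in the extremal pattern predicted by the covering lemma, thereby forcing near-equality in every inequality used for the upper bound. Letting the depth $n$ tend to infinity and relaxing the truncation should make the ratio $\|M_\mathcal{G}\xi\|_p/\|\xi\|_p$ approach $C_{p,k}$, which establishes that the constant in \eqref{Doobm} cannot be lowered.
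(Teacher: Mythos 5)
Your upper bound is correct, and it takes a genuinely more direct route than the paper does. The paper never runs a covering argument on the atoms of the filtrations: it first transfers the problem to the star-shaped domain $\mathcal{R}_k$ via the rearrangement majorization \eqref{mainLE} (whose proof uses Doob's weak-type bound for each filtration as a black box together with a set-theoretic decomposition of $\bigcup_j A_j$), and only then proves a selection lemma for genuine balls on $\mathcal{R}_k$ to obtain \eqref{weaktype}. Your plan collapses these two steps into one probabilistic argument, and it works; moreover, the overlap control you single out as the main obstacle is in fact easier here than in the geometric setting. For each $j$ the stopping time $\tau_j=\inf\{i:|\mathbb E(\xi|\mathcal G_i^j)|>s\}$ splits $A_j=\{\max_i|\mathbb E(\xi|\mathcal G_i^j)|>s\}$ into disjoint sets $\{\tau_j=i\}\in\mathcal G_i^j$ with $s\,\mathbb P(\tau_j=i)\le\int_{\{\tau_j=i\}}|\xi|\,\mathrm{d}\mathbb P$, so the selected sets automatically have multiplicity at most $k$ (at most one per filtration); no atomicity reduction and no analogue of the paper's interval-selection procedure is required. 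The one point you leave implicit, and which is the actual source of the constant, is that the surplus multiplicity must be charged to $(|\xi|-s)_+$ rather than yielding a multiplicative refinement: writing $N(\omega)=\#\{j:\omega\in A_j\}$ and summing the stopped inequalities gives
$$ s\,\mathbb P(M_{\mathcal G}\xi>s)\le\int_{\{M_{\mathcal G}\xi>s\}}|\xi|\,\mathrm{d}\mathbb P+\int_\Omega (N-1)_+(|\xi|-s)\,\mathrm{d}\mathbb P\le \int_{\{M_{\mathcal G}\xi>s\}}|\xi|\,\mathrm{d}\mathbb P+(k-1)\int_\Omega(|\xi|-s)_+\,\mathrm{d}\mathbb P,$$
and it is the additive term $(k-1)\int(|\xi|-s)_+\,\mathrm{d}\mathbb P$, contributing $\frac{k-1}{p-1}\|\xi\|_p^p$ after the layer-cake integration, that produces the $-(k-1)$ in \eqref{defC} after H\"older. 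With that made explicit your derivation of \eqref{Doobm} is complete, and for Theorem \ref{Db} alone it is arguably cleaner than the paper's; the detour through $\mathcal R_k$ is what additionally yields Theorems \ref{HL} and \ref{mainL}.

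The sharpness half is where the genuine gap lies. ``Arrange the $k$ filtrations so that the ancestor-chains overlap in the extremal pattern'' is not a construction, and the required configuration is not something a discretization of $[0,1]$ produces on its own: the paper takes the probability space to be $\mathcal R_k$ itself, sets $\xi(\omega)=|\omega|^{-r}$ with $r\uparrow p^{-1}$, and generates the $j$-th filtration by a self-similar sequence of balls $\delta^nB_j$, where $B_j$ covers the entire $j$-th ray together with initial segments of length $\lambda_{r,k}^{-1/r}$ of the other $k-1$ rays; the calibration \eqref{lam} is exactly what makes the average of $\xi$ over $|x|B_j$ equal to $\lambda_{r,k}\,\xi(x)$, with $\lambda_{r,k}\to C_{p,k}$. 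The idea your sketch is missing is that the near-extremal averaging set for a point $x$ must combine the tail of $\xi$ beyond $x$ along its own chain with $k-1$ independent copies of the peak of $\xi$ supplied by the other filtrations --- this is what saturates the Doob step and the multiplicity-$k$ step simultaneously, and it forces the balance equation. Without identifying this configuration, the claim that $\|M_\mathcal{G}\xi\|_p/\|\xi\|_p$ approaches $C_{p,k}$ remains unsupported.
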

That is, the constant $C_{p,k}$ is the best universal constant allowed in \eqref{Doobm}, where the universality is the non-dependence on $n$, the length of the filtrations building $\mathcal{G}$.

\smallskip

We turn our attention to the analytic contents of the paper. Let $k$ be a fixed positive integer. Consider the set $\mathcal{R}_k=\bigcup_{j=1}^k H_j$, where $H_j$ is the line segment on the complex plane,  with endpoints $0$ and $e^{2\pi i j/k}$, $j=1,\,2,\,\ldots,\,k$. That is, $\mathcal{R}_k$ is a tree-shaped domain being the union of $k$  rays $H_1$, $H_2$, $\ldots$, $H_k$, each having length one. We equip $\mathcal{R}_k$ with the standard British railway metric and the normalized one-dimensional Lebesgue measure $\lambda_k$. Then we can introduce the concept of the decreasing rearrangement on $\mathcal{R}_k$. Namely, for an arbitrary random variable $\xi$ on $(\Omega,\mathcal{F},\mathbb{P})$, we define first its distribution function $d_\xi:[0,\infty)\to [0,1]$ by $ d_\xi(s)=\mathbb{P}(|\xi|>s)$. Then the associated $k$-decreasing rearrangement $\xi^*_{(k)}:\mathcal{R}_k\to [0,\infty)$ is given by
$$ \xi^*_{(k)}(e^{2\pi ij/k}t)=\inf\{s>0\,:\,d_\xi(s)\leq t\},\qquad j=1,\,2,\,\ldots,\,k.$$
Equivalently, $\xi^*_{(k)}$ can be defined by taking the standard decreasing rearrangement $\xi^*$ on $[0,1]$ and copying it on each ray $H_j$, in accordance with the natural order induced by the distance from $0$. Thus, we immediately see that  $|\xi|$ and $\xi^*_{(k)}$ have the same distributions (as random variables on $\Omega$ and $\mathcal{R}_k$, respectively). Furthermore, $\xi^*_{(k)}$ is radially decreasing, i.e., $\xi^*_{(k)}(x)=\xi^*_{(k)}(|x|)$ decreases as $|x|$ grows.

Finally, we introduce  the uncentered Hardy-Littlewood maximal function $\mathcal{M}_{(k)}$ in the above setup. This operator acts on integrable functions $f$ on $\mathcal{R}_k$ by the usual formula
$$ \mathcal{M}_{(k)}f(x)=\sup \frac{1}{\lambda_k(B)}\int_B |f|\mbox{d}\lambda_k,\qquad x\in \mathcal{R}_k,$$
where the supremum is taken over all open balls $B\subseteq \mathcal{R}_k$ which contain $x$. We will identify the $L^p$ norm of this object.

\begin{thm}\label{HL}
For any $1<p<\infty$ and any $k\geq 1$ we have $\|\mathcal{M}_{(k)}\|_{L^p\to L^p}=C_{p,k}$, where $C_{p,k}$ is given in \eqref{defC}.
\end{thm}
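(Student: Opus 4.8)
The plan is to reduce the computation of $\|\mathcal{M}_{(k)}\|_{L^p\to L^p}$ to a one-dimensional extremal problem for radially decreasing functions, exactly in the spirit of the Grafakos--Kinnunen--Montgomery-Smith program. First I would argue that the extremal behaviour occurs on radially decreasing data: since the $k$-decreasing rearrangement preserves the $L^p$ norm, $\|f\|_{L^p(\lambda_k)}=\|f^*_{(k)}\|_{L^p(\lambda_k)}$, and since averages of $|f|$ over balls can only increase once the mass is rearranged toward the centre $0$, one obtains $\|\mathcal{M}_{(k)}f\|_p\le\|\mathcal{M}_{(k)}f^*_{(k)}\|_p$. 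Thus the norm is computed over $f(x)=\phi(|x|)$ with $\phi:[0,1]\to[0,\infty)$ nonincreasing, and the normalization of $\lambda_k$ reduces both sides to Lebesgue integrals $\int_0^1\phi^p\,dt$ and $\int_0^1\Psi^p\,dt$, where $\Psi(t):=\mathcal{M}_{(k)}f(x)$ with $t=|x|$.

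Next I would make the operator explicit on radial decreasing data. Writing $\Phi(t)=\int_0^t\phi(s)\,ds$ and analysing the British railway geometry, every ball either lies inside a single ray or, after reaching the centre, covers an initial segment $[0,u]$ of one ray and $[0,v]$ with $0\le v\le u$ of each of the remaining $k-1$ rays. Maximizing the average $\frac{\Phi(u)+(k-1)\Phi(v)}{u+(k-1)v}$ over balls containing the point at distance $t$, the monotonicity of $\phi$ forces $u=t$ and an interior optimal depth $v^*=v^*(t)\in[0,t]$, yielding
\[ \mathcal{M}_{(k)}f(x)=\Psi(t)=\sup_{0\le v\le t}\frac{\Phi(t)+(k-1)\Phi(v)}{t+(k-1)v},\qquad t=|x|, \]
together with the balance relation $\phi(v^*)=\Psi(t)$, equivalently $\Psi(t)\,\big(t+(k-1)v^*\big)=\Phi(t)+(k-1)\Phi(v^*)$.

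For the lower bound I would test the power functions $\phi_\beta(t)=t^{-\beta}$ with $0<\beta<1/p$. On $[0,1]$ all quantities in the formula for $\Psi$ involve only $[0,t]$, so $\phi_\beta$ is an exact eigenfunction: the optimal depth is $v^*=c\,t$ for some $c=c(\beta)\in(0,1)$, whence $\mathcal{M}_{(k)}\phi_\beta=c^{-\beta}\phi_\beta$ and the ratio $\|\mathcal{M}_{(k)}\phi_\beta\|_p/\|\phi_\beta\|_p$ equals $C(\beta):=c^{-\beta}$. Substituting $\beta=1/p$ into the balance relation and eliminating $c$ collapses, after elementary algebra, precisely to $(p-1)C^p-pC^{p-1}-(k-1)=0$, i.e. to \eqref{defC}; letting $\beta\uparrow 1/p$ gives $C(\beta)\uparrow C_{p,k}$ and hence $\|\mathcal{M}_{(k)}\|_{L^p\to L^p}\ge C_{p,k}$.

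The crux is the matching upper bound $\|\Psi\|_p\le C_{p,k}\|\phi\|_p$. Here I would pass to the distribution functions $S(\lambda)=|\{\phi>\lambda\}|$ and $T(\lambda)=|\{\Psi>\lambda\}|$, both nonincreasing with $T\ge S$; evaluating the balance relation at a point where $\Psi(t)=\lambda$ (so that $v^*=S(\lambda)$ and $t=T(\lambda)$) produces the level-set identity $\lambda\,\big(T+(k-1)S\big)=\Phi(T)+(k-1)\Phi(S)$. Rewriting $\Phi(T)$ and $\Phi(S)$ through $S(\cdot)$ by the layer-cake formula turns the target inequality $\int_0^\infty\lambda^{p-1}T\,d\lambda\le C_{p,k}^p\int_0^\infty\lambda^{p-1}S\,d\lambda$ into a one-variable differential inequality in $\lambda$, which I expect to close with exactly the constant pinned down by \eqref{defC} — the same algebra as in the eigenfunction computation, now run as an inequality rather than an identity. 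I anticipate this last step to be the main obstacle: one must verify that $C_{p,k}$, and no smaller constant, makes the differential inequality valid for every admissible $\phi$, and separately handle the boundary contributions near $t=1$ as well as the case in which the optimal depth $v^*$ is attained at an endpoint.
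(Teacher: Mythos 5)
Your lower bound is correct and is essentially the paper's own construction: testing on $\phi_\beta(t)=t^{-\beta}$, the optimal ball through a point at distance $t$ covers $[0,t]$ on its own ray and $[0,ct]$ on the others, the balance relation $\phi(v^*)=\Psi(t)$ gives $C=c^{-\beta}$, and the algebra does collapse to \eqref{defC} as $\beta\uparrow 1/p$ (this is exactly the paper's equation \eqref{lam} with $r=\beta$). Likewise, your level-set identity $\lambda\bigl(T+(k-1)S\bigr)=\Phi(T)+(k-1)\Phi(S)$ for radially decreasing data is precisely the paper's Lemma \ref{aux}.

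The genuine gap is the very first step, the reduction $\|\mathcal{M}_{(k)}f\|_p\le\|\mathcal{M}_{(k)}f^*_{(k)}\|_p$. The justification you offer --- that ball averages ``can only increase once the mass is rearranged toward the centre'' --- is not a proof and is false as a pointwise statement: if $f$ is concentrated near a point $x$ far from the origin, then $\mathcal{M}_{(k)}f(x)$ is large while $\mathcal{M}_{(k)}f^*_{(k)}(x)$ can be small, since rearrangement moves the mass out of every small ball around $x$. What you actually need is a comparison of distribution functions, i.e.\ a Herz-type inequality $(\mathcal{M}_{(k)}f)^*\le\mathcal{M}_{(k)}(f^*_{(k)})$, and on the tree this is a theorem in its own right whose proof requires controlling the overlap of a covering of the level set $\{\mathcal{M}_{(k)}f>s\}$ by balls. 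That covering argument (selecting, ray by ray, a subfamily in which every point lies in at most $k$ balls) is exactly the combinatorial heart of the paper's Proposition, which proves the weak-type estimate \eqref{weaktype} for \emph{arbitrary} $f$ and thereby bypasses any rearrangement reduction. Your proposal contains no substitute for it, so the upper bound is only established for radially decreasing inputs, which does not bound the operator norm. A secondary, lesser issue: the final passage from the level-set identity to $\|\Psi\|_p\le C_{p,k}\|\phi\|_p$ is only ``anticipated''; the standard closing argument is to multiply the weak-type inequality by $ps^{p-2}$, integrate, apply H\"older, and use that $s\mapsto (p-1)s^p-ps^{p-1}-(k-1)$ is increasing on $[1,\infty)$ with unique root $C_{p,k}$ --- this part is routine once the weak-type estimate is available for general $f$.
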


For $k=1$, this is the classical result of Hardy, the case $k=2$ was established by Grafakos and Montgomery-Smith \cite{grafakos1}, our contribution is the analysis for $k\geq 3$. Furthermore, we will link the context of coherent distributions with the analytic setup above, intertwining the contents of Theorems \ref{Db} and \ref{HL}.

\begin{thm} \label{mainL}  
Let $k,\,n\geq 1$ be fixed integers.  
Suppose further that $\xi$ is an integrable random variable and assume that $\mathcal{G}=\big\{\mathcal{G}_{i}^{j}\big\}_{\substack{1\le i \le n, 1\le j \le k}}$ is a union of filtrations as above. Then the maximal function  $M_\mathcal{G}\xi$ satisfies the majorization
\begin{equation} \label{mainLE} 
(M_\mathcal{G}\xi)^*_{(k)}\leq \mathcal{M}_{(k)} (\xi^*_{(k)})\qquad \lambda_k\mbox{-almost everywhere on } \mathcal{R}_k.
\end{equation}
\end{thm}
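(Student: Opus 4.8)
The plan is to deduce the almost-everywhere majorization \eqref{mainLE} from a single-level (distributional) statement. Since $\xi^*_{(k)}$ is radially symmetric and radially decreasing, the value $\mathcal{M}_{(k)}(\xi^*_{(k)})(x)$ depends only on $t=|x|$, and it is itself radially decreasing: it is a supremum of averages $\frac{1}{\lambda_k(B)}\int_B\xi^*_{(k)}\,\mathrm d\lambda_k$ over open balls $B\ni x$, and the family of admissible balls only shrinks as $t$ grows. Because the left-hand side $(M_\mathcal{G}\xi)^*_{(k)}$ is radially decreasing by its very construction, it suffices to prove the following claim at each level $\lambda>0$: writing $m=m(\lambda):=\mathbb{P}(M_\mathcal{G}\xi>\lambda)$, the point $x_\lambda\in\mathcal{R}_k$ at distance $m$ from the centre satisfies $\mathcal{M}_{(k)}(\xi^*_{(k)})(x_\lambda)\ge\lambda$. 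A short monotonicity argument then upgrades this to \eqref{mainLE}: were \eqref{mainLE} to fail at some radius $t_0$, I could pick $\lambda$ strictly between $\mathcal{M}_{(k)}(\xi^*_{(k)})(t_0)$ and $(M_\mathcal{G}\xi)^*_{(k)}(t_0)$; the lower bound $\lambda<(M_\mathcal{G}\xi)^*_{(k)}(t_0)$ forces $m(\lambda)>t_0$ by the definition of the decreasing rearrangement, and the claim at $x_\lambda$ together with the radial monotonicity of $\mathcal{M}_{(k)}(\xi^*_{(k)})$ then yields $\lambda\le\mathcal{M}_{(k)}(\xi^*_{(k)})(x_\lambda)\le\mathcal{M}_{(k)}(\xi^*_{(k)})(t_0)<\lambda$, a contradiction.

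To produce the required ball I would first extract the probabilistic input from each filtration separately. Fix $\lambda>0$ and for each $j$ set $A_j=\{\sup_i|\mathbb{E}(\xi|\mathcal{G}^j_i)|>\lambda\}$; introducing the first-passage stopping time $\tau_j=\inf\{i:|\mathbb{E}(\xi|\mathcal{G}^j_i)|>\lambda\}$ and using $\{\tau_j=i\}\in\mathcal{G}^j_i$ together with conditional Jensen, the classical weak-type argument gives $\lambda\,\mathbb{P}(A_j)\le\int_{A_j}|\xi|\,\mathrm d\mathbb{P}$. Summing over $j=1,\dots,k$ and recording the multiplicity $N=\sum_{j=1}^k\mathbf 1_{A_j}=\#\{j:\sup_i|\mathbb{E}(\xi|\mathcal{G}^j_i)|>\lambda\}$ yields the key estimate $\lambda\,\mathbb{E}[N]\le\int_\Omega N\,|\xi|\,\mathrm d\mathbb{P}$. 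Here $N$ counts how many of the $k$ filtrations have already crossed level $\lambda$, and it is exactly this integer-valued multiplicity that will play the role of distance from the centre of $\mathcal{R}_k$.

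The analytic heart is to convert $\int_\Omega N|\xi|$ into the tree average attached to one admissible ball. Layering $N=\sum_{l\ge0}\mathbf 1_{\{N>l\}}$ and applying the elementary Hardy--Littlewood bound $\int_E|\xi|\le\int_0^{\mathbb{P}(E)}\xi^*$ to each $E=\{N>l\}$ gives $\int_\Omega N|\xi|\le\sum_{l=0}^{k-1}\int_0^{m_l}\xi^*$, where $m_l=\mathbb{P}(N>l)$, so $m_0=m$ and $m_l\le m_0$ for all $l$. I would then set $b=m_0=m$ and $a=\tfrac1{k-1}\sum_{l=1}^{k-1}m_l$; concavity of $s\mapsto\int_0^s\xi^*$ (Jensen) turns $\sum_{l=1}^{k-1}\int_0^{m_l}\xi^*$ into at most $(k-1)\int_0^a\xi^*$, while $b+(k-1)a=\sum_{l=0}^{k-1}m_l=\mathbb{E}[N]$. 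Combining this with the estimate of the previous paragraph produces $\lambda\big(b+(k-1)a\big)\le\int_0^b\xi^*+(k-1)\int_0^a\xi^*$. Since $N\le k$ the layering truncates at $l=k-1$, and $m_l\le m_0$ gives $0\le a\le b\le1$, so $(a,b)$ is admissible; moreover the right-hand side is precisely the $\lambda_k$-average of $\xi^*_{(k)}$ over the British-railway ball whose trace is $[0,b)$ on the home ray of $x_\lambda$ and $[0,a)$ on each of the remaining $k-1$ rays (the normalizing factor $1/k$ cancels). As $b=m$, this ball contains $x_\lambda$ up to the endpoint radius, whence $\mathcal{M}_{(k)}(\xi^*_{(k)})(x_\lambda)\ge\lambda$, which is the claim.

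I expect the main obstacle to be the combinatorial matching in the last paragraph: recognising that the multiplicity layers $\{N>l\}$ correspond to the $k-1$ equal-length rays together with one distinguished home ray, and that the concavity step manufactures exactly the weight $(k-1)$ that appears in the tree average. A secondary technical point is the explicit ball geometry in the British-railway metric, namely checking that a ball with home-ray reach $b$ automatically has reach $a=\rho-c_0$ on every other ray with $a\le b$, so that the pair $(a,b)$ coming from $N$ is genuinely realized by an open ball; this must be combined with the usual limiting care at the boundary radius $b=m$ (replacing $b$ by $b+\varepsilon$ and letting $\varepsilon\to0$) and with the generalized-inverse relation between $(M_\mathcal{G}\xi)^*_{(k)}$ and $m(\lambda)$ used in the reduction.
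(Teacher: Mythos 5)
Your argument is correct, and while it reaches essentially the same level-set inequality $\mathbb{P}(M_\mathcal{G}\xi>\lambda)\leq\lambda_k(\mathcal{M}_{(k)}\xi^*_{(k)}>\lambda)$ that the paper reduces Theorem \ref{mainL} to, the route through it is genuinely different. The paper first augments the family with $\mathcal{G}^j_{n+1}=\mathcal{F}$ so that $A_0=\{|\xi|>\lambda\}\subseteq A_j$ for every $j$, applies Doob's weak-type bound not to the $A_j$ themselves but to the telescoping pieces $\tilde A_j=A_0\cup\big(A_j\setminus(A_0\cup\cdots\cup A_{j-1})\big)$, obtains the two-term inequality $\lambda\big[\mathbb{P}(\bigcup_j A_j)+(k-1)\mathbb{P}(A_0)\big]\leq\int_{\bigcup_j A_j}|\xi|\,\mathrm{d}\mathbb{P}+(k-1)\int_{A_0}|\xi|\,\mathrm{d}\mathbb{P}$, identifies the right-hand side via Lemma \ref{aux} as the exact average of $\xi^*_{(k)}$ over the extremal ball at level $\lambda$, and closes with a ``reversed monotonicity'' rearrangement argument. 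You instead sum the raw weak-type bounds to get $\lambda\,\mathbb{E}[N]\leq\int N|\xi|\,\mathrm{d}\mathbb{P}$ for the multiplicity $N$, then convert the right-hand side into a single ball average by layering, the Hardy--Littlewood rearrangement inequality, and concavity of $s\mapsto\int_0^s\xi^*$. This dispenses with the augmentation, the telescoping decomposition and Lemma \ref{aux} (in particular the continuity of $\mathcal M_{(k)}\xi^*_{(k)}$), at the cost of producing a non-extremal ball with side-reach $\frac{1}{k-1}\sum_{l\geq1}\mathbb{P}(N>l)$ instead of $\mathbb{P}(|\xi|>\lambda)$; the paper's exact ball is what gets reused for the weak-type estimate of $\mathcal{M}_{(k)}$ and the sharpness construction, which is what its extra bookkeeping buys. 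Two small repairs: your justification that $\mathcal{M}_{(k)}(\xi^*_{(k)})$ is radially decreasing is not right as stated --- the family of balls containing $x$ does \emph{not} shrink as $|x|$ grows (a small ball about the origin contains near points but not far ones, and conversely); the correct argument extends any interval $(\alpha,\beta)\ni|x|$ toward the origin and uses that $\xi^*$ is decreasing, so the average can only grow. Relatedly, the $\varepsilon$-enlargement at the boundary radius $b=m(\lambda)$ is not actually needed: the contradiction argument only requires $\mathcal{M}_{(k)}(\xi^*_{(k)})\geq\lambda$ at radii $t_0<m(\lambda)$, and such points lie in the open ball $B_{a,b}$ once its home ray is chosen to be theirs.
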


The remaining part of the paper is split into two sections. In Section 2 we establish  Theorem \ref{mainL}. In the last part of the paper, we establish the $L^p$ bound $\|\mathcal{M}^k\|_{L^p\to L^p}\leq C_{p,k}$, which allows us to deduce \eqref{Doobm} immediately. Furthermore, we show there the sharpness of the latter inequality, thus completing the proofs of all aforementioned results.

From now on, the parameter $k$ will be kept fixed; to simplify the notation, we will skip the index and write $\xi^*$, $\mathcal{M}$ instead of $\xi^*_{(k)}$ and $\mathcal{M}_{(k)}$, respectively.

 \section{Proof of Theorem \ref{mainL}}
 
 We will need the following property of the Hardy-Littlewood maximal operator.
 
 \begin{lemma}\label{aux}
 Suppose that $\xi$ is an integrable random variable. Then for any $s>0$ such that $\lambda_k(\mathcal M\xi^*>s)<1$ we have
\begin{align*}
 &s\Big((k-1)\lambda_k(\xi^*>s)+\lambda_k(\mathcal M\xi^*>s)\Big)\\
&\quad \qquad \qquad =(k-1)\int_{\{\xi^*>s\}}\xi^*\mbox{d}\lambda_k+\int_{\{\mathcal M\xi^*>s\}} \xi^*\mbox{d}\lambda_k.
 \end{align*}
 \end{lemma}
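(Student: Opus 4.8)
The plan is to reduce the claimed integral identity to a single scalar equation governing the boundary radius of the super-level set $\{\mathcal M\xi^*>s\}$, and then to extract that equation from the explicit shape of the optimizing ball at the critical radius. Write $\phi$ for the common radial profile of $\xi^*$, i.e.\ the ordinary decreasing rearrangement of $|\xi|$ on $[0,1]$, so that $\xi^*(x)=\phi(|x|)$ on each ray. Since $\phi$ is decreasing, the set $\{\xi^*>s\}$ consists, on each of the $k$ rays, of the segment $\{|x|<a\}$ with $a:=d_\xi(s)$; consequently $\lambda_k(\xi^*>s)=a$ and $\int_{\{\xi^*>s\}}\xi^*\,\mathrm d\lambda_k=\int_0^a\phi$, the normalizing factor $1/k$ cancelling against the $k$ rays. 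By the rotational symmetry of the configuration (rotation by $2\pi/k$ permutes the rays while preserving both $\xi^*$ and the metric), $\mathcal M\xi^*$ is radial and decreasing in $|x|$, so $\{\mathcal M\xi^*>s\}$ is likewise a union of segments $\{|x|<b\}$ with $b:=\lambda_k(\mathcal M\xi^*>s)<1$, and $\int_{\{\mathcal M\xi^*>s\}}\xi^*\,\mathrm d\lambda_k=\int_0^b\phi$. Substituting these four quantities, the assertion of the Lemma becomes equivalent to the single identity
\begin{equation*}
sb-\int_0^b\phi=(k-1)\Big(\int_0^a\phi-sa\Big).
\end{equation*}

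To produce this identity I would analyze the maximal operator through the geometry of balls in the British railway metric. A ball that meets the centre is a \emph{star}: it reaches distance $R$ along the ray carrying its centre and a common distance $r\le R$ along each of the remaining $k-1$ rays, with average
\begin{equation*}
F(R,r)=\frac{\int_0^R\phi+(k-1)\int_0^r\phi}{R+(k-1)r}.
\end{equation*}
Balls not meeting the centre lie inside a single ray and, for a decreasing profile, never beat the star reaching the origin, so one may restrict attention to stars. Fixing a point at distance $b$ on one ray, I would take it to lie on the long ray (placing it on a short ray only forces a larger, hence less favourable, star), which imposes the constraint $R\ge b$. Differentiating, $\partial_R F$ has the sign of $\phi(R)-F$ and $\partial_r F$ the sign of $\phi(r)-F$, so any unconstrained stationary point would require $\phi(R)=\phi(r)$, i.e.\ $R=r$; since at the outer radius $b$ this origin-centred candidate undershoots, the constraint is active and the optimum sits at $R=b$ with $r$ fixed by the first-order condition $\phi(r)=F$.

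At the boundary radius $b$ of the open super-level set one has $\mathcal M\xi^*=s$, so the optimal average equals $s$; the first-order condition then reads $\phi(r)=s$, which gives exactly $r=a$, as $a=d_\xi(s)$ is where $\phi$ crosses the level $s$. Feeding $R=b$, $r=a$ and $F=s$ into the star average yields
\begin{equation*}
s=\frac{\int_0^b\phi+(k-1)\int_0^a\phi}{b+(k-1)a},
\end{equation*}
which rearranges precisely to the displayed identity and completes the argument. The case $k=1$ degenerates to the classical one-dimensional statement $\int_0^b\phi=sb$, while $k=2$ recovers the symmetric-interval computation of Grafakos and Montgomery-Smith.

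The step I expect to be the main obstacle is the rigorous identification of the extremal star at the critical radius: verifying that the constraint $R=b$ is genuinely active, that the optimal short reach is attained in the interior at $r=a$ rather than at an endpoint, and that $F$ is quasiconcave so that the first-order conditions actually pin down a maximum. Care is also needed where $\phi$ is flat at the level $s$, so that $\phi(r)=s$ determines $r$ only through the convention $a=d_\xi(s)$, and near the outer ends of the rays; this last point is exactly where the hypothesis $\lambda_k(\mathcal M\xi^*>s)<1$ enters, guaranteeing that the optimal star fits inside $\mathcal R_k$ without truncation.
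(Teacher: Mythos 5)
Your proposal is correct and follows essentially the same route as the paper: both arguments locate the boundary point $u$ of the superlevel set where $\mathcal{M}\xi^*(u)=s$ (via continuity and radial monotonicity of $\mathcal{M}\xi^*$), identify the extremal ball as the star reaching exactly to $|u|=b$ on $u$'s ray and to $a=d_\xi(s)$ on the remaining rays, and read off the claimed identity from the equation ``average $=s$''. The paper simply asserts the shape of the optimizing ball where you derive it from the signs of $\partial_RF$ and $\partial_rF$, and the caveats you flag (flat spots of $\phi$, attainment) are harmless, since what is really needed is that $(R,r)\mapsto\int_0^R(\phi-s)+(k-1)\int_0^r(\phi-s)$ attains its maximum, namely $0$, at $(b,a)$.
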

 \begin{proof}
If $s\geq \|\xi\|_\infty$, then the assertion is evident (both sides are zero), so from now on we assume that $s<\|\xi\|_\infty$. The  
 function $\mathcal M\xi_{(k)}^*$ is radially decreasing along the rays of $\mathcal{R}_k$. Furthermore, it is continuous, which follows directly from Lebesgue's dominated convergence theorem. Thus there exists $u\in \mathcal{R}_k$, lying on the ray $H_1$, for which $s=\mathcal M\xi^*(u)$. It is easy to identify the ball $B$ for which the supremum defining $\mathcal M\xi^*(u)$ is attained: $u$ must be one of its boundary points, and the intersection $B\cap H_j$ for $j\neq 1$ must be the part of $H_j$ on which we have $f>s$. It remains to note that the equality
$$ s=M\xi^*(u)=\frac{1}{\lambda_k(B)}\int_B \xi^*\mbox{d}\lambda_k$$
is equivalent to the claim. Indeed, we have $\lambda_k(B)=\frac{k-1}{k}\lambda_k(\xi^*>s)+\frac{1}{k}\lambda_k(\mathcal M\xi^*>s)$, with a similar identity for $\int_B \xi^*\mbox{d}\lambda_k$.
 \end{proof}

\begin{proof}[Proof of Theorem \ref{mainL}] It is enough to show the tail inequality
\begin{equation}\label{tail}
 \mathbb{P}(M_\mathcal{G}\xi>s)\leq \lambda_k(\mathcal{M}\xi^*>s)
\end{equation}
for all $s$. Now we consider two separate steps.

\smallskip

\emph{Step 1. Reductions.} Let us first exclude the trivial cases: from now on, we will assume that $\lambda_k(\mathcal M\xi^*>s)<1$ and $s<\|\xi\|_\infty$. Indeed, if $\lambda_k(\mathcal M\xi^*>s)=1$, then there is nothing to prove, while for $s\geq \|\xi\|_\infty$ both sides of \eqref{tail} are zero. 
Adding the full $\sigma$-algebras $\mathcal{G}^j_{n+1}=\mathcal F$, $j=1,\,2,\,\ldots,\,k$ to the collection $\mathcal{G}$ if necessary, we may and do assume that 
\begin{equation}\label{light}
\max_i |\mathbb E(\xi|\mathcal{G}^j_i)|\geq |\xi|\qquad \mbox{ almost surely for all $j$.}
\end{equation}
In particular, this gives $M_\mathcal{G}\xi\geq |\xi|$ with probability $1$. 

\smallskip

\emph{Step 2. Proof of theorem.} Fix an arbitrary  $s>0$ and write
 $$ \mathbb{P}(M_\mathcal{G}\xi>s)=\mathbb{P}(A_1\cup A_2\cup \ldots \cup A_k),$$
 where $A_j=\{\max_i |\mathbb{E}(\xi|\mathcal{G}^j_i)|>s\}$, $j=1,\,2,\,\ldots,\,k$. Let us distinguish the additional event $A_0=\{|\xi|>s\}$ and observe that $A_0\subseteq A_j$ for each $j$, in the light of \eqref{light}. Note that if $\tilde{A}_j$ is an arbitrary event satisfying $A_0\subseteq \tilde{A}_j\subseteq A_j$, then we have
 \begin{equation}\label{weaktype0}
 s\mathbb{P}(\tilde{A}_j)- \int_{\tilde{A}_j}|\xi|\mbox{d}\mathbb{P}=\int_{\tilde{A}_j}(s-|\xi|)\mbox{d}\mathbb{P}\leq \int_{A_j}(s-|\xi|)\mbox{d}\mathbb{P}\leq 0,
 \end{equation}
where the latter bound follows from Doob's weak-type bound for martingale maximal function. 
 Next, we write
 \begin{align*}
 &\mathbb{P}(A_1\cup A_2\cup \ldots \cup A_k)\\
&=\mathbb{P}(A_0\cup A_1\cup A_2\cup \ldots \cup A_k)\\
 &=\mathbb{P}(A_0)+\mathbb{P}(A_1\setminus A_0)+\mathbb{P}(A_2\setminus (A_1\cup A_0))+\ldots+\mathbb{P}(A_n\setminus (A_{n-1}\cup A_{n-2}\cup \ldots A_0)).
 \end{align*}
 Set $\tilde{A}_j=A_0\cup \big(A_j\setminus (A_{j-1}\cup A_{j-2}\cup \ldots \cup A_0)\big)$, apply \eqref{weaktype0} and add the  estimates over $j$. Combining the result with the above formula for $\mathbb{P}(A_1\cup A_2\cup \ldots \cup A_k)$, we obtain
 \begin{align*}
 s\Big[\mathbb{P}(A_1\cup A_2\cup \ldots \cup A_k)+(k-1)\mathbb{P}(A_0)\Big]&=s\sum_{j=1}^k \mathbb{P}(\tilde{A}_j)\leq \sum_{j=1}^k \int_{\tilde{A}_j} |\xi|\mbox{d}\mathbb{P},
\end{align*} 
or equivalently,
$$ s\big[\mathbb{P}(M_\mathcal{G}\xi>s)+(k-1)\mathbb{P}(A_0)\big]\leq \int_{\{M_\mathcal{G}\xi>s\}}|\xi|\mbox{d}\mathbb{P}+(k-1)\int_{A_0}|\xi|\mbox{d}\mathbb{P}.$$
Since $|\xi|$ and $\xi^*$ are equidistributed, we have $\mathbb{P}(A_0)=\lambda_k(\xi^*>s)$ and $\int_{A_0}|\xi|\mbox{d}\mathbb{P}=\int_{\{\xi^*>s\}}\xi^*\mbox{d}\lambda_k$. Plugging this above and applying Lemma \ref{aux}, we get
$$ \int_{\{M_\mathcal{G}\xi>s\}}(s-|\xi|)\mbox{d}\mathbb{P}\leq \int_{\{M\xi^*>s\}} (s-\xi^*)\mbox{d}\lambda_k,$$
or, subtracting the equality $\int_{\{|\xi|>s\}}(s-|\xi|)\mbox{d}\mathbb{P}=\int_{\{\xi^*>s\}}(s-\xi^*)\mbox{d}\lambda_k$,
$$ \int_{\{M_\mathcal{G}\xi>s\}}(s-|\xi|)_+\mbox{d}\mathbb{P}\leq \int_{\{\mathcal M\xi^*>s\}} (s-\xi^*)_+\mbox{d}\lambda_k=\int_{\mathcal{R}_k}\chi_{\{\mathcal M\xi^*>s\}} (s-\xi^*)_+\mbox{d}\lambda_k.$$
However, the nonnegative functions $\chi_{\{\mathcal M\xi^*>s\}}$ and $(s-\xi^*)_+$ have the reversed monotonicity along the rays: the first of them is non-increasing, while the second is non-decreasing. Since $(s-|\xi|)_+$ and $(s-\xi^*)_+$ have the same distribution, \eqref{tail} follows.
\end{proof}

 \section{$L^p$ estimates}

We turn our attention to Theorems \ref{Db} and \ref{HL}. First we will establish the $L^p$ bound for the uncentered maximal operator; the key ingredient of the proof is the following weak-type estimate.

\begin{prop}
For an arbitrary integrable function $f$ on $\mathcal{R}_k$ and any $s>0$ we have
\begin{equation}\label{weaktype}
 s\lambda_k(\mathcal{M}f>s)+s(k-1)\lambda_k(|f|>s)\leq \int_{\{\mathcal{M}f>s\}}|f|\mbox{d}\lambda_k+(k-1)\int_{\{|f|>s\}}|f|\mbox{d}\lambda_k.
\end{equation}
\end{prop}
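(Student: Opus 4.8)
The plan is to recast \eqref{weaktype} in a form to which the equality of Lemma \ref{aux} applies, and then to show that the radially decreasing configuration is extremal. Since $\mathcal{M}f$ and every term in \eqref{weaktype} depends only on $|f|$, I may assume $f\geq 0$. The Lebesgue differentiation theorem on $\mathcal{R}_k$ gives $\mathcal{M}f\geq f$ almost everywhere, so $\{f>s\}\subseteq\{\mathcal{M}f>s\}$ up to a null set. Grouping the four terms of \eqref{weaktype} and using $\int_{\{f>s\}}(s-f)=-\int_{\{f>s\}}(f-s)$, the estimate is seen to be equivalent to
\[
 \int_{\{\mathcal{M}f>s\}}(s-f)\,\mbox{d}\lambda_k\leq (k-1)\int_{\{f>s\}}(f-s)\,\mbox{d}\lambda_k.
\]
The right-hand side depends only on the distribution of $f$, and Lemma \ref{aux}, after the same rearrangement of terms, asserts precisely that for the radially decreasing rearrangement $f^{*}$ of $f$ this inequality holds with \emph{equality} whenever $\lambda_k(\mathcal{M}f^{*}>s)<1$. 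Thus it suffices to prove the displayed inequality directly and to know that the radial arrangement is the worst case.

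To establish the inequality I would exploit that $\mathcal{R}_k$ is a tree: the open set $U=\{\mathcal{M}f>s\}$ splits into its central component $C$, which contains the hub $0$ and therefore meets every ray in an initial segment (any neighborhood of $0$ touches all $k$ rays), together with countably many peripheral open intervals $I_i$ lying inside a single ray. On a single ray the supremum over balls not meeting the hub is exactly the one-dimensional uncentered maximal operator, so the classical Riesz rising-sun lemma shows that the mean of $f$ over each bounded peripheral component equals $s$; hence $\int_{I_i}(s-f)\,\mbox{d}\lambda_k=0$, while $(k-1)\int_{I_i\cap\{f>s\}}(f-s)\,\mbox{d}\lambda_k\geq0$, so the peripheral pieces already satisfy the required bound. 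The whole problem therefore collapses to the central star, where one must show
\[
 \int_{C}(s-f)\,\mbox{d}\lambda_k\leq (k-1)\int_{C\cap\{f>s\}}(f-s)\,\mbox{d}\lambda_k.
\]

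The main obstacle is exactly this central estimate for a \emph{non-radial} $f$. The difficulty is that at a boundary point of $C$ on a given ray the ball realizing $\mathcal{M}f=s$ need not pass through the hub, so the multiplicity $k-1$—produced by balls through $0$ averaging over all $k$ rays simultaneously—is not handed to us as it is in the symmetric situation of Lemma \ref{aux}. I would resolve this by a symmetrization argument: replacing $f$ by $f^{*}$ on $C$ can only increase the deficit $\int_C(s-f)$ relative to the fixed excess on the right, because the British-railway geometry forces the extremal ball at the outermost boundary point of the symmetrized star to run through the center and to cover precisely $\{f^{*}>s\}$ on the remaining $k-1$ rays—the configuration isolated in the proof of Lemma \ref{aux}, which then yields the central inequality with equality. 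Finally I would dispose of the excluded case $\lambda_k(\mathcal{M}f^{*}>s)=1$ separately: there $\mathcal{M}f^{*}>s$ persists out to the endpoints, which forces the mean of $f^{*}$ over a full ray through the hub to exceed $s$, and \eqref{weaktype} follows by a short direct computation.
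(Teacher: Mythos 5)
Your reduction of \eqref{weaktype} to $\int_{\{\mathcal{M}f>s\}}(s-f)\,\mathrm{d}\lambda_k\leq(k-1)\int_{\{f>s\}}(f-s)\,\mathrm{d}\lambda_k$ and the splitting of the open set $\{\mathcal{M}f>s\}$ into a central component and peripheral intervals are correct, but both substantive steps that follow fail. The rising-sun lemma gives mean exactly $s$ over the components of the level set of the \emph{one-sided} maximal function; for the two-sided uncentered operator the mean of $f$ over a bounded peripheral component $(a,b)$ is at most $s$ (the intervals $(a-\varepsilon,b)$ contain the point $a$, where $\mathcal{M}f(a)\le s$), and it is typically strictly smaller. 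For instance, for $f=\chi_{[0.4,0.6]}$ and $s=1/2$ on a segment of unit length, the component is $(0.2,0.8)$ and the mean of $f$ over it is $1/3$. Thus $\int_{I_i}(s-f)\,\mathrm{d}\lambda_k\geq 0$ and can be strictly positive, which is the \emph{wrong} sign for your argument: the peripheral pieces are not disposed of. What you actually need there is $\int_{I_i}(s-f)\leq(k-1)\int_{I_i\cap\{f>s\}}(f-s)$; in the example this reads $0.1\leq 0.1(k-1)$, so it is sharp at $k=2$ and is precisely the localized one-dimensional weak-type inequality of Grafakos and Montgomery-Smith --- a true but nontrivial statement that requires its own covering argument and is not a consequence of the sunrise lemma. (It also fails for $k=1$, so any correct treatment of the peripheral intervals must use $k\geq 2$ somewhere; yours does not.)

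The central component is then handled by the assertion that replacing $f$ by $f^{*}$ on $C$ ``can only increase the deficit,'' for which no proof is offered. The set $C$ itself moves under rearrangement, and rearrangement domination for uncentered maximal operators is exactly the kind of statement that weak-type inequalities of the form \eqref{weaktype} are used to \emph{establish} (this is the logic of Theorem \ref{mainL}, and Lemma \ref{aux} concerns only the already-radial function $\xi^{*}$); as written the step is circular, or at best leaves the main difficulty untouched. The paper's proof takes a different, self-contained route: it covers $\{\mathcal{M}f>s\}$ by balls on which the mean of $|f|$ exceeds $s$, performs a greedy selection along each ray yielding two interlaced disjoint chains, shows that the selected family has overlap multiplicity at most $k$, and sums the inequalities $s\lambda_k(B)\leq\int_B|f|\,\mathrm{d}\lambda_k$, charging the overlap to the term $(k-1)\int(|f|-s)_+\,\mathrm{d}\lambda_k$. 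To repair your approach you would need a covering (or a genuine two-sided sunrise) argument on each peripheral interval together with an honest analysis of the central star; at that point you would essentially have rebuilt the paper's selection argument.
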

\begin{proof}
It is convenient to split the reasoning into two steps.

\smallskip

\emph{Step 1. Special balls in $\mathcal{R}_k$}. Let us consider the level set $E=\{x\in \mathcal{R}_k\,:\,\mathcal{M}f>s\}$. 
Then for each $x\in E$ there is an open ball $B_x\subseteq\mathcal{R}_k$ which contains $x$ and satisfies $ \lambda_k(B_x)^{-1}\int_{B_x} |f|\mbox{d}\lambda_k>s$. This inequality implies that $B_x\subseteq E$ and hence $\bigcup_{x\in E}B_x=E$. By the Lindel\H of's theorem, we may pick a countable subcollection $(B_{x_n})_{n=1}^\infty$ such that $\bigcup_{n=1}^\infty B_{x_n}=E$. With no loss of generality, we may assume that $B_{x_i}$ is not a subset of $B_{x_j}$ for $i\not=j$. We fix an integer $N$ and restrict ourselves to the finite family $\mathcal{B}=(B_{x_n})_{n=1}^N$. The idea is to pick a subcollection $\mathcal{B}'$ of $\mathcal{B}$ which does not overlap too much. To this end, we will choose appropriate balls from each separate ray of $\mathcal{R}_k$, exploiting the natural order induced by the distance from $0$. For simplicity, we will only describe the procedure for the $k$-th ray (i.e., for the interval $[0,1]$), the argument for other rays is the same, up to rotation.

First, we pick a ball from $\mathcal{B}$ which contains zero and call it $J_0$ (if no ball in $\mathcal{B}$ contains zero, we let $J_0=\emptyset$; if there are several balls with this property, we take the ball whose intersection with $[0,1]$ has the biggest measure). Next we apply the following inductive procedure.

\smallskip

1$^\circ$ Suppose that we have successfully defined $J_{n}$. Consider the family 
of all intervals $J\in \mathcal{B}$ which intersect $J_{n}$ and satisfy $\sup J > \sup J_{n}$. If 
this family is nonempty, choose the interval with largest left-endpoint (if this object is not unique, pick the one with the biggest measure) and 
denote it by $J_{n+1}$.

2$^\circ$ If the family in 1$^\circ$ is empty, then consider all intervals $J\in \mathcal{B}$ with
$\inf J \ge \sup J_{n}$. If this family is nonempty, choose an element with the 
smallest left-endpoint (again, if this object is not unique, pick the one 
with the biggest measure) and denote it by $J_{n+1}$.

3$^\circ$ Go to 1$^\circ$.

\smallskip

Since the family $\mathcal{B}$ is finite, the procedure stops after a number of sets (in 1$^\circ$ and 2$^\circ$, there are no balls to choose from) and returns a family $J_0^j$, $J_1^j$, $J_2^j$, $\ldots$, $J_{m_j}^j$ of balls. Observe that by the very construction, $J_0^j$, $J_2^j$, $J_4^j$, $\ldots$ are pairwise disjoint and the same is true for $J_1^j$, $J_3^j$, $J_5^j$,  $\ldots$. Letting 
$$\mathcal{B}'=\Big\{J_\ell^j\,:\, 1\leq \ell\leq m_j,\,j=1,\,2,\,\ldots,\,k\Big\},$$
we easily check that 
\begin{equation}\label{union}
\bigcup_{B\in \mathcal{B}}B=\bigcup_{B\in \mathcal{B}'}B.
\end{equation} 
Next, by the disjointness properties of the sequences $J^j_i$, note that a family $\mathcal{B}'$ has the following property: each point $x\in \mathcal{R}_k$ belongs to at most $k+1$ elements of $\mathcal{B}'$. Moreover, we can actually improve this last bound by $1$. Now, say that there is a point $x_0\in \mathcal{R}_k$ which belongs to exactly $k+1$ elements of $\mathcal{B}'$ and let us assume that $x_0$ belongs to the the $k$-th ray $H_k$. By the extremality of $J_0^k$ we must have $(J_0^i \cap [0,1])  \subset (J_0^k \cap [0,1])$ for all $i=1,2,\dots,k-1$, and hence
$$x_0 \in \bigcap_{j=1}^k J_0^j \cap J_1^k.$$

Thus, we simply remove $J_0^k$ from the family $\mathcal{B}'$. Such a modification does not affect the validity of \eqref{union} and proves our assertion.

\smallskip

\emph{Step 2. Calculation.} Since $\mathcal{B}'\subseteq \mathcal{B}$, each element $B$ of $\mathcal{B}'$ satisfies
$$ s\lambda_k(B)\leq \int_B |f|\mbox{d}\lambda_k.$$
Summing over all $B\in \mathcal{B}'$, we thus obtain
$$ s\left[\lambda\left(\bigcup_{B\in \mathcal{B}'}B\right)+\sum_{j=2}^k \lambda_k(A_j)\right]\leq \int_{\bigcup_{B\in \mathcal{B}'}B}|f|\mbox{d}\lambda_k+\sum_{j=2}^k \int_{A_j}|f|\mbox{d}\lambda_k,$$
where $A_j$ is the collection of all $x\in \mathcal{R}_k$ which belong to exactly $j$ elements of $\mathcal{B}'$. This is equivalent to
\begin{align*}
s\lambda\left(\bigcup_{B\in \mathcal{B}}B\right)&\leq \int_{\bigcup_{B\in \mathcal{B}}B}|f|\mbox{d}\lambda_k+\sum_{j=2}^k \int_{A_j}(|f|-s)\mbox{d}\lambda_k\\
&\leq \int_{\bigcup_{B\in \mathcal{B}}B}|f|\mbox{d}\lambda_k+\sum_{j=2}^k \int_{A_j}(|f|-s)_+\mbox{d}\lambda_k\\
&\leq \int_{\bigcup_{B\in \mathcal{B}}B}|f|\mbox{d}\lambda_k+(k-1)\int_{\bigcup_{j=2}^k A_j}(|f|-s)_+\mbox{d}\lambda_k\\
&\leq \int_{\bigcup_{B\in \mathcal{B}}B}|f|\mbox{d}\lambda_k+(k-1)\int_{\mathcal{R}_k}(|f|-s)_+\mbox{d}\lambda_k.
\end{align*}
Now recall that the family $\mathcal{B}$ depended on $N$. Letting this parameter to infinity and using Lebesgue's monotone convergence theorem, we obtain
$$ s\lambda(E)\leq \int_E |f|\mbox{d}\lambda_k+(k-1)\int_{\mathcal{R}_k}(|f|-s)_+\mbox{d}\lambda_k.$$
This is precisely the claim.
\end{proof}

Now, using the standard integration argument, we obtain the $L^p$ estimate for the uncentered maximal operator on $\mathcal{R}_k$.

\begin{proof}[Proof of \eqref{Doobm}] By Fubini's theorem, we have
\begin{align*}
 &\int_{\mathcal{R}_k} \big(\mathcal{M} f\big)^p\mbox{d}\lambda_k+(k-1)\int_{\mathcal{R}_k}|f|^p\mbox{d}\lambda_k\\
 &=p\int_0^\infty s^{p-1}\left[\lambda_k(\mathcal{M} f>s)+(k-1)\lambda_k(|f|>s)\right]\mbox{d}s,
 \end{align*}
 which by \eqref{weaktype} does not exceed
 \begin{align*}
 &p\int_0^\infty s^{p-2}\left[\int_{\{\mathcal{M}f>s\}}|f|\mbox{d}\lambda_k+(k-1)\int_{\{|f|>s\}}|f|\mbox{d}\lambda_k\right]\mbox{d}s\\
&= \frac{p}{p-1} \int_{\mathcal{R}_k} \Big((\mathcal{M}f)^{p-1}|f|+(k-1)|f|^p\Big)\mbox{d}\lambda_k.
\end{align*}
Here in the last passage we have used Fubini's theorem again. This gives the bound
$$ \int_{\mathcal{R}_k} \big(\mathcal{M} f\big)^p\mbox{d}\lambda_k\leq \frac{p}{p-1}\int_{\mathcal{R}_k} \big(\mathcal{M}f\big)^{p-1}|f|\mbox{d}\lambda_k+\frac{k-1}{p-1}\int_{\mathcal{R}_k}|f|^p\mbox{d}\lambda_k.$$
However, by H\"older's inequality, we have
$$ \int_{\mathcal{R}_k} \big(\mathcal{M}f\big)^{p-1}|f|\mbox{d}\lambda_k\leq \left(\int_{\mathcal{R}_k}\big(\mathcal{M}f\big)^p\mbox{d}\lambda_k\right)^{(p-1)/p}\left(\int_{\mathcal{R}_k}|f|^p\mbox{d}\lambda_k\right)^{1/p},$$
which combined with the previous estimate yields
 $$(p-1)\Bigg( \frac{\| \mathcal{M}f\|_{L^p(\mathcal{R}_k)}}{\|f\|_{L^p(\mathcal{R}_k)}}\Bigg)^p  -  p\Bigg(\frac{\| \mathcal{M}f\|_{L^p(\mathcal{R}_k)}}{\|f\|_{L^p(\mathcal{R}_k)}}\Bigg)^{p-1} \ - \ (k-1) \le  0.$$
It remains to note that the function $s\mapsto (p-1)s^p-ps^{p-1}-(k-1)$ is increasing on $[1,\infty)$ and $C_{p,k}$ is its unique root. 
This establishes the desired $L^p$ bound $\|\mathcal{M}f\|_{L^p(\mathcal{R}_k)}\leq C_{p,k}\|f\|_{L^p(\mathcal{R}_k)}$.
\end{proof}

Combining the $L^p$ estimate we have just proved with the inequality \eqref{mainLE}, we immediately obtain \eqref{Doobm}, Doob's inequality for the coherent random variables. It remains to prove the optimality of the constant $C_{p,k}$ in the latter estimate. Having proved this sharpness, we immediately deduce the optimality of the constant for the uncentered maximal operator.

\begin{proof}[Sharpness of $C_{p,k}$ for coherent variables] Let $1<p<\infty$ and $k\in \{1,2,\ldots\}$ be fixed.  Consider the probability space $\mathcal{R}_k$ with its Borel subsets and normalized one-dimensional Lebesgue's measure $\lambda_k$. Fix an auxiliary constant $r\in (0,p^{-1})$ and consider the random variable $\xi(\omega)=|\omega|^{-r}$: then the estimate $r<p^{-1}$ guarantees that this variable belongs to $L^p$. To define the filtrations, let $\lambda_{r,k}$ be the unique root of the equation
\begin{equation}\label{lam}
\lambda(1-r)-(k-1)r\lambda^{(r-1)/r}-1=0,\qquad  1\leq \lambda<\infty.
\end{equation}
The existence and uniqueness of $\lambda_{r,k}$ is direct consequence of the fact that the left-hand side, considered as a function of $\lambda$, is strictly increasing, negative at $\lambda=1$ and positive for large $\lambda$. Now, for any $j\in \{1,2,\ldots,k\}$, introduce the closed ball $B_j$ which has the center $e^{2\pi i j/k} (1-\lambda_{r,k}^{-1/r})/2$ and radius $(1+\lambda_{r,k}^{-1/r})/2.$ This ball covers the whole ray $H_j$ and some portion of the remaining rays: $|B_j\cap H_i|=\lambda_{r,k}^{-1/r}$ for $i\neq j$. Therefore if $x$ lies on the $j$-th ray of $\mathcal{R}_k$, then the rescaled ball $|x|B_j=\{|x|y\in \mathcal{R}_k:y\in B_j\}$ satisfies
$$
 \frac{1}{\lambda_k(|x|B_j)}\int_{|x|B_j} \xi \mbox{d}\lambda_k=\frac{\int_0^{|x|}\omega^{-r}\mbox{d}\omega+(k-1)\int_0^{\lambda_{r,k}|x|} \omega^{-r}\mbox{d}\omega}{|x|+\lambda_{r,k}|x|}=\lambda_{r,k}\cdot \xi(x),
$$
by \eqref{lam}. Since both sides are homogeneous of order $-r$ (as a function of $x$), one can actually show a bit more: for any $\varepsilon >0$ there is $\delta\in (0,1)$ such that if $y\in H_j$ satisfies $\delta<|y/x|\leq 1$, then
\begin{equation}\label{aver}
 \frac{1}{\lambda_k(|x|B_j)}\int_{|x|B_j} \xi \mbox{d}\lambda_k\geq (\lambda_{r,k}-\varepsilon)\cdot \xi(y).
\end{equation}
Fix $\varepsilon,\,\delta$ with the above property and pick a large integer $N$. For any $n=0,\,1,\,2,\,\ldots,\,N$, let $\mathcal{G}^j_n$ be the $\sigma$-algebra generated by the balls $B_j$, $\delta B_j$, $\delta^2B_j$, $\ldots$, $\delta^{n-1} B_j$. It follows directly from \eqref{aver} that 
$$ M_\mathcal{G}\xi \geq (\lambda_{r,k}-\varepsilon )\xi\qquad \mbox{almost surely on }\mathcal{R}_k\setminus \delta^N B_j.$$
But $\xi\in L^p$, as we have already discussed above. Since $\varepsilon$ and $N$ were taken arbitrarily, the best constant allowed in the estimate \eqref{Doobm} is at least $\lambda_{r,k}$. It remains to note that if we let $r\to p^{-1}$, then $\lambda_{r,k}$ converges to the constant $C_{p,k}$: in the limit, the equation \eqref{lam} becomes \eqref{defC}. This proves the desired sharpness.
\end{proof}


\setlength{\baselineskip}{2ex}

\bibliographystyle{plain}
\bibliography{kFILbib}

\end{document}